\newcommand{\e}{\varepsilon}
\renewcommand{\phi}{\varphi}
\newcommand{\iy}{\infty}
\renewcommand{\leq}{\leqslant}
\renewcommand{\geq}{\geqslant}
\newcommand{\N}{\mathbf{N}}
\newcommand{\Z}{\mathbf{Z}}
\newcommand{\R}{\mathbf{R}}
\newcommand{\E}{\mathbf{E}}
\renewcommand{\P}{\mathbf{P}}
\DeclareMathOperator{\vol}{vol}
\DeclareMathOperator{\card}{card}
\theoremstyle{plain}
\newtheorem{thm}{Theorem} 
\newtheorem*{thm-nonumber}{Theorem}
\newtheorem*{prop-nonumber}{Proposition}
\newtheorem*{claim*}{Claim}
\newtheorem{lem}{Lemma}
\theoremstyle{definition}
\theoremstyle{remark}
\begin{document}

\title{Maximal inequality for high-dimensional cubes}
\author{Guillaume \textsc{AUBRUN}}
\subjclass{42B25}
\keywords{Maximal inequality, high-dimensional cubes, Brownian bridge}

\begin{abstract}
We present lower estimates for the best constant appearing in the weak $(1,1)$ maximal inequality in the space $(\R^n,\|\cdot\|_{\iy})$. We show that this constant grows to infinity faster than $(\log n)^{1-o(1)}$ when $n$ tends to infinity. To this end, we follow and simplify the approach used by J.M.~Aldaz. The new part of the argument relies on Donsker's theorem identifying the Brownian bridge as the limit object describing the statistical distribution of the coordinates of a point randomly chosen in the unit cube $[0,1]^n$ ($n$ large).
\end{abstract}

\maketitle

\section*{Introduction}

Let $\vol(\cdot)$ denote the Lebesgue measure on $\R^n$. For $x \in \R^n$ and $r>0$, let $Q(x,r)$ denote the $n$-dimensional cube with center $x$ and edge length $2r$. For a positive Borel measure $\mu$ on $\R^n$, let $M\mu$ be the ``cubic'' centered Hardy--Littlewood maximal function
\[ M\mu(x) := \sup_{r > 0} \frac{\mu(Q(x,r))}{\vol(Q(x,r))} .\]
The maximal function is a fundamental tool in harmonic analysis, and it satisfies the following weak $(1,1)$ inequality: for any positive Borel measure $\mu$ on $\R^n$ and any $L > 0$,
\begin{equation} \label{ineqmax}
 L \cdot \vol \{ M\mu > L \} \leq C \mu(\R^n) .
\end{equation}
We denote by $\Theta_n$ the best possible $C$ appearing in \eqref{ineqmax}. Using a mollifying argument, one can check that restricting to absolutely continuous measures---or functions in $L^1(\R^n)$---does not alter the value of~$\Theta_n$.  

We are interested in the asymptotic behavior of $\Theta_n$. The easiest proof of \eqref{ineqmax} goes through Vitali's covering lemma, and gives $\Theta_n \leq 3^n$ (see for example \cite{rudin}, Chapter 7). This was greatly improved by Stein and Strömberg, who obtained $\Theta_n \leq C n \log n$ for some absolute constant $C$, using a more sophisticated covering argument  \cite{st-st}. This is the best known upper bound. 

Conversely, Aldaz proved recently that the sequence $(\Theta_n)$ tends to $+\iy$ when $n$ increases \cite{aldaz}. Aldaz's argument involves some tedious computations. It was shown in \cite{cs} that a careful look at the argument gives the lower bound $\Theta_n \geq c \sqrt{\log n}/\log \log n$ for some absolute constant $c>0$. 

We introduce in Aldaz's proof an extra tool: the Brownian bridge as a limit object from statistics. This has two consequences. First, this gives a shorter and conceptually simpler proof of the fact that $\Theta_n \to \infty$. Second, using extra known estimates on the Brownian bridge, we get a better lower bound on $\Theta_n$. This is our main theorem.

\begin{thm}
\label{main}
For any $0<\e<1$, there is a constant $c(\e) >0$ so that
\[ \Theta_n \geq c(\e) (\log n)^{1-\e} .\]
\end{thm}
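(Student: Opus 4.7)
The plan is, for each large $n$, to exhibit a probability measure $\mu$ on $\R^n$ and a level $L>0$ such that $\vol\{M\mu>L\} \geq c(\e)(\log n)^{1-\e}/L$; by the definition of $\Theta_n$ this immediately yields $\Theta_n \geq c(\e)(\log n)^{1-\e}$.

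\emph{Step 1: reduction to an empirical functional.} I would take $\mu$ concentrated on a low-dimensional skeleton inside $[0,1]^n$, for concreteness a thin diagonal-type subset (such as the segment $\{(t,\dots,t):t\in[0,1]\}$), possibly enriched by auxiliary scales or translates, designed so that for $x\in[0,1]^n$ the quantity $\mu(Q(x,r))/(2r)^n$ depends on $x$ only through the empirical distribution function $F_n$ of its coordinates $x_1,\dots,x_n$. Once this is set up, $M\mu(x)>L$ reduces to a statement of the form ``a supremum over scales $r$ of a specific functional of $F_n$ exceeds $L$''.

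\emph{Step 2: passing to the Brownian bridge.} When $x$ is uniform in $[0,1]^n$, the coordinates $x_i$ are i.i.d.\ uniform on $[0,1]$, and Donsker's theorem asserts that $\sqrt{n}(F_n(t)-t)$ converges in law to a Brownian bridge $B$ on $[0,1]$. This translates the event $\{M\mu>L\}$ into an event about $B$, and reduces the desired lower bound on $\vol\{M\mu>L\}$ to one on the probability of the corresponding event for $B$. Unlike in Aldaz's original combinatorial approach, this opens the door to sharp probabilistic tools: the reflection principle, the LIL for bridges, and exact tail distributions for functionals such as $\sup_{0<t<1} B(t)/\sqrt{t(1-t)}$.

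\emph{Step 3: the bridge estimate.} To obtain the factor $(\log n)^{1-\e}$, I would invoke a near-critical tail estimate for the bridge --- a lower bound on the probability that $B$ makes unusually large oscillations at small scales, of ``nearly LIL size''. The factor $\sqrt{\log n}$ obtained in Aldaz--Criado-Soria corresponds to ignoring the scale structure and estimating a single Gaussian tail; staying strictly below the critical LIL scale is what costs the factor $(\log n)^{\e}$. Combining this with the value of $L$ fixed in Step~1 yields $L\cdot\vol\{M\mu>L\}\geq c(\e)(\log n)^{1-\e}$, proving the theorem.

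The main obstacle is the transfer from the Brownian-bridge asymptotic to a genuine finite-$n$ bound. Donsker's theorem is only a weak-convergence statement, so one has to either carefully exchange the limit $n\to\iy$ with the event $\{M\mu>L\}$ through a portmanteau-type argument, or --- more robustly --- use a non-asymptotic version of the bridge tail estimate applied directly to the empirical process. A secondary difficulty is calibrating the skeleton measure $\mu$ and the family of admissible radii so that the bridge functional one ends up with is one for which a sharp tail bound of the desired order is actually available; too crude a choice recovers only $\sqrt{\log n}$.
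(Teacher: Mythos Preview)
Your probabilistic outline (Steps 2 and 3) is essentially the paper's approach: the empirical process of the coordinates, Donsker's theorem, the Brownian bridge functional $\sup_t \beta_t/\sqrt{t(1-t)}$, and an LIL-type lower bound on its tail. You also correctly anticipate the main technical obstacle---the need for a quantitative, finite-$n$ version of Donsker---which the paper resolves via the Koml\'os--Major--Tusn\'ady coupling.

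The genuine gap is Step 1. The measure you propose does not work, and the ``right'' $\mu$ is not a detail one can leave unspecified---it is the heart of the construction. The paper takes $\mu$ to be the counting measure on the lattice $\Z^n$ (truncated to a large box), not a measure on a one-dimensional skeleton. Your diagonal measure gives $\mu(Q(x,r))$ equal to the length of $\{t:\max_i|x_i-t|\leq r\}$, which depends only on $\max_i x_i-\min_i x_i$; this is far too crude a statistic of the empirical distribution to recover anything like $\sup_t \beta_t/\sqrt{t(1-t)}$. By contrast, with the lattice measure the key computation is that for $x\in[0,1]^n$, integer $s\geq 1$, and $r=s-(1-t)/2$, the cube $Q(x,r)$ contains exactly $(2s)^m(2s-1)^{n-m}$ lattice points, where $m$ is the number of coordinates $x_i\in[\tfrac{1-t}{2},\tfrac{1+t}{2}]$. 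Optimizing over $s$ one finds that $\log M\mu(x)$ is bounded below by roughly $\tfrac12\bigl(\alpha_t^{(n)}/\sqrt{t(1-t)}\bigr)^2$ where $\alpha_t^{(n)}=n^{-1/2}(m-nt)$ is the centred, normalized count. This is the crucial deterministic lemma (Lemma~\ref{lemme1} in the paper) that converts $M\mu>L$ into the event $\sup_t \alpha_t^{(n)}/\sqrt{t(1-t)}\gtrsim\sqrt{2\log L}$, after which your Steps 2--3 apply. Without this lemma and the specific choice of $\mu$ that makes it true, the argument does not get off the ground.
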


We refer to \cite{aldaz} for additional background and a complete account of the bibliography. Let us just emphasize two related open problems:
\begin{enumerate} 
 \item Let $\tilde{M}\mu$ be the ``Euclidean'' maximal function, defined similarly to $M\mu$ but using Euclidean balls instead of
 cubes. Let $\tilde\Theta_n$ be the constant in the corresponding weak $(1,1)$ inequality. Is $\tilde\Theta_n$ bounded by an
 absolute consant ?
 \item For $p>1$, let $\Theta_n^{(p)}$ be the best constant in the strong $(p,p)$ inequality in $(\R^n,\|\cdot\|_\iy)$, i.e. the 
smallest $C$ so that  $\|Mf\|_{L^p} \leq C\|f\|_{L^p}$ for any function $f \in L^p(\R^n)$. Is $\Theta_n^{(p)}$ bounded by a constant depending only on $p$ ?
This has been answered affirmatively by Bourgain for $p>3/2$ (\cite{bourgain}, see also \cite{carbery}).
\end{enumerate}

A very closely related result due to Stein asserts that, for every $p>1$, the {\itshape Euclidean} maximal function $\tilde{M}$ does satisfy a strong $(p,p)$ inequality with constant independent of the dimension (see e.g. the Appendix of \cite{st-st} for a proof).

High-dimensional phenomena often involve probabilistic considerations, and indeed it is the case here. 
Let us sketch our proof. We follow closely Aldaz's approach, but using more sophisticated tools, so that we obtain more precise results.
To give a lower bound on $\Theta_n$, we choose $\mu$ to be a very simple measure: the counting measure on the lattice $\Z^n$ (after a suitable truncation). The key fact is that the value of $M\mu(x)$ is closely related to the statistical distribution of the coordinates of $x$ modulo $1$. By a theorem due to Donsker, the asymptotic statistical behavior is governed by a stochastic process $(\beta_t)_{0 \leq t \leq 1}$ called the Brownian bridge. More precisely, the typical value of the maximal function is related to the following quantity: 
\[ \sup_{\e \leq t \leq 1-\e} \frac{\beta_t}{\sqrt{t(1-t)}} .\]
The order of magnitude of the latter is given by the law of the iterated logarithm. We also need a quantitative estimate on the speed of convergence in Donsker's theorem, and to this effect we use the Koml\'os--Major--Tusn\'ady theorem.

\medskip

\noindent {\bf Acknowledgements:} I thank Nadine Guillotin-Plantard for helpful explanations about the Brownian bridge, and Jes\'us Mun\'arriz Aldaz for several comments on the paper.

\section{Qualitative approach}

In this section, we introduce some objects needed to prove our theorem, and give a proof of Aldaz's result ($\Theta_n \to \iy$) which requires less calculations that the original one.
Throughout the paper, $\mu$ will denote the measure on $\R^n$ obtained by putting a Dirac mass on each lattice point $x \in \Z^n$. That is, for any Borel set $A$, $\mu(A) := \card(A \cap \Z^n)$. 

\subsection{Reduction to $[0,1]^n$}
Although the measure $\mu$ has infinite mass, it can be used to estimate the constant $\Theta_n$, as showed by the following lemma
\begin{lem}
\label{lemma-truncation}
For any $L > 1$, 
\[L \cdot \vol \left(\left\{ x \in [0,1]^n \textnormal{ s.t. } M\mu(x) > L \right\}\right) \leq \Theta_n .\]
\end{lem}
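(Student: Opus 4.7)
The plan is to apply the weak $(1,1)$ inequality \eqref{ineqmax} not to $\mu$ itself (which has infinite mass) but to the truncated measure $\mu_N := \mu |_{[0,N]^n}$, then use the $\Z^n$-periodicity of $M\mu$ to extract a copy of the set $\{x \in [0,1]^n : M\mu(x) > L\}$ many times inside $\{M\mu_N > L\}$, and finally let $N \to \infty$ so that boundary losses wash out.

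The first step is to bound the radii that matter. Since the number of integer points in any interval of length $2r$ is at most $2r+1$, one has $\mu(Q(y,r))/\vol(Q(y,r)) \leq (1+1/(2r))^n$ for every $y \in \R^n$. Setting $R := 1/(2(L^{1/n}-1))$, this ratio is $\leq L$ whenever $r \geq R$. Consequently, if $M\mu(y) > L$ then the supremum is necessarily attained (or exceeded) by some $r < R$, and the corresponding cube $Q(y,r)$ is contained in $[0,N]^n$ as soon as $y \in [R, N-R]^n$. In that case $\mu(Q(y,r)) = \mu_N(Q(y,r))$, and we conclude the pointwise implication
\[
 y \in [R, N-R]^n \text{ and } M\mu(y) > L \quad \Longrightarrow \quad M\mu_N(y) > L.
\]

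The second step is to use that $\mu$, and hence $M\mu$, is invariant under $\Z^n$-translations. Let $V := \vol(\{x \in [0,1]^n : M\mu(x) > L\})$. The box $[R, N-R]^n$ contains at least $(N-2R-1)^n$ disjoint translates of the fundamental domain $[0,1]^n$ by integer vectors, so by periodicity
\[
 \vol\left(\{y \in [R,N-R]^n : M\mu(y) > L\}\right) \geq (N-2R-1)^n \cdot V.
\]
Combined with the pointwise inclusion above, this gives $\vol\{M\mu_N > L\} \geq (N-2R-1)^n V$.

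Finally, apply \eqref{ineqmax} to $\mu_N$, which has total mass $(N+1)^n$:
\[
 L \cdot (N-2R-1)^n \cdot V \;\leq\; L \cdot \vol\{M\mu_N > L\} \;\leq\; \Theta_n \cdot (N+1)^n.
\]
Dividing and sending $N \to \iy$ (with $R$ fixed, depending only on $L$ and $n$) makes the ratio $((N+1)/(N-2R-1))^n$ tend to $1$, yielding $L \cdot V \leq \Theta_n$, as required. The only real subtlety is controlling the two kinds of boundary error---cubes near $\partial [0,N]^n$ where $\mu$ and $\mu_N$ disagree, and the fractional part of $[R,N-R]^n$ that is not exactly tiled by integer translates of $[0,1]^n$---but both produce multiplicative factors of the form $1 + O_n(1/N)$ that vanish in the limit.
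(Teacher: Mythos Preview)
Your proof is correct and follows essentially the same approach as the paper's: both truncate $\mu$ to a large box, observe that cubes of radius beyond a threshold depending on $L$ contribute at most $L$ to the supremum, use $\Z^n$-periodicity to tile the interior of the box by copies of $[0,1]^n$ on which the truncated and untruncated maximal functions agree, apply the weak $(1,1)$ inequality to the truncated measure, and send the box size to infinity. The only differences are cosmetic (you use $[0,N]^n$ and the exact threshold $R=1/(2(L^{1/n}-1))$, whereas the paper uses the centered cube $Q(0,R)$ and the cruder bound $n/(2\log L)$ for the cutoff radius).
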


\begin{proof}[Proof of lemma \ref{lemma-truncation}]
The point is that large cubes can be ignored when computing $M\mu$. Indeed, since a cube of edge length $2r$ contains at most $(2r+1)^n$ lattice points, we get for any $L>1$,
\begin{equation} \label{largecubes} \sup_{r > n/2\log L} \frac{\mu(Q(x,r))}{\vol(Q(x,r))} \leq  
\sup_{r > n/2\log L} \left( 1 +\frac{1}{2r} \right)^n \leq 
\left( 1 + \frac{\log L}{n} \right)^n \leq L. \end{equation}
Let $\mu_R$ be the restriction of $\mu$ to the cube $Q(0,R)$. It follows from \eqref{largecubes} that for any $x \in 
Q(0,R-n/2\log L)$, $M\mu(x) >L$ if and only if $M\mu_R(x) >L$. Using the obvious fact that $M\mu$ is $\Z^n$-periodic,
this gives
\begin{eqnarray*} \Theta_n & \geq & \frac{L \cdot \vol\left(\left\{x \in \R^n \textnormal{ s.t. } M\mu_R(x) >L \right\} \right)}{\mu_R(\R^n)} \\ 
& \geq & \frac{L \cdot \vol \left(\{ x \in Q(0,R-n/2\log L) \textnormal{ s.t. } M\mu(x)>L \}\right) }{(2\lfloor R \rfloor +1)^n} \\
&\geq & \left(\frac{2 \lfloor R-n/2\log L \rfloor}{2\lfloor R \rfloor+1}\right)^n \cdot L \cdot \vol \left(\left\{ x \in [0,1]^n \textnormal{ s.t. } M\mu(x) > L \right\} \right). 
\end{eqnarray*}

Ir remains to take $R \to \iy$.
\end{proof}

\subsection{Maximal function and statistical distributions of coordinates} 
For $x \in [0,1]^n$, we relate the value of $M\mu(x)$ to the statistical distribution of the coordinates of $x$
in the interval $[0,1]$.
For $t \in (0,1)$, a number $x \in [0,1]$ is called $t$-centered if it belongs to $\left[\frac{1-t}{2},\frac{1+t}{2}\right]$. Define the following set

\[ E_{t,K}^n := \left\{ x \in [0,1]^n \textnormal{ with at least } nt+K\sqrt{n t(1-t)} \textnormal{ $t$-centered coordinates} \right\}. \]

This definition may look strange but the motivation comes from probability theory. Think of $x$ as a random variable uniformly distributed on
$[0,1]^n$. The number of $t$-centered coordinates is a random variable with expectation $nt$, and standard deviation $\sqrt{nt(1-t)}$. In particular, it follows from the central limit theorem that
\[ \lim_{n \to \iy} \vol(E_{t,K}^n) = \P(G > K), \]
where $G$ is a standard Gaussian random variable. The next lemma shows that, asymptotically, the maximal function is large on the set $E_{t,K}^n$. The important point is that the lower bound depends only on $K$, and not on $t$. This lemma is essentially a variant on Claim 1 from \cite{aldaz}.

\begin{lem}
\label{lemme1}
For any $\eta>0$, there exists a constant $D(\eta)$ so that for any $K>0$ and  $t \in (0,1)$, if $n \geq \frac{D(\eta)K^2}{t(1-t)}$, then 
\[ E_{t,K}^n \subset \left\{ M\mu > e^{K^2/(2+\eta)} \right\} . \label{lem1} \] 
\end{lem}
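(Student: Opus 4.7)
Given $x\in E_{t,K}^n$, let $j\geq nt+K\sqrt{nt(1-t)}$ be the number of $t$-centered coordinates of $x$. My goal is to exhibit a cube $Q(x,r)$ whose density $\mu/\vol$ exceeds $e^{K^2/(2+\eta)}$. The family I would try is $Q(x,r_k)$ with $r_k:=k+\tfrac{1+t}{2}$, for an integer $k\geq 0$ to be chosen. A direct one-variable check shows that the interval $[x_i-r_k,x_i+r_k]$ contains $2k+2$ integers if $x_i$ is $t$-centered and $2k+1$ integers otherwise, so by the product structure
\[ \frac{\mu(Q(x,r_k))}{\vol(Q(x,r_k))}=\frac{(2k+2)^j(2k+1)^{n-j}}{(2k+1+t)^n}=\exp\phi(s),\qquad s:=\tfrac{1}{2k+1},\]
where $\phi(s):=j\log(1+s)-n\log(1+ts)$.

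The plan is to pick $k$ so that $s$ is close to $s_\star:=(j-nt)/(j-nt^2)$, the maximizer of the quadratic approximation $\phi_2(s):=(j-nt)s-(j-nt^2)s^2/2$ of $\phi$ near the origin. Substituting $j\geq nt+K\sqrt{nt(1-t)}$ gives $s_\star\approx K/\sqrt{nt(1-t)}$ and
\[\phi_2(s_\star)=\frac{(j-nt)^2}{2(j-nt^2)}\geq\frac{K^2}{2\bigl(1+K/\sqrt{nt(1-t)}\bigr)}.\]
Under the hypothesis $n\geq D(\eta)K^2/(t(1-t))$, the ratio $K/\sqrt{nt(1-t)}$ is at most $1/\sqrt{D(\eta)}$, so this principal term already exceeds $K^2/(2+\eta/2)$ once $D(\eta)$ is large.

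Two error terms then remain. First, the discretization $s\in\{1,\tfrac13,\tfrac15,\ldots\}$: choosing $2k+1$ to be the odd integer nearest to $1/s_\star$ gives $|s-s_\star|=O(s_\star^2)$, and since $|\phi_2''|\leq 2nt(1-t)$, the resulting loss is at most $O(K^4/(nt(1-t)))=O(K^2/D(\eta))$. Second, the Taylor tail $\phi-\phi_2=\sum_{m\geq 3}(-1)^{m+1}(j-nt^m)s^m/m$: using the elementary bound $|j-nt^m|\leq K\sqrt{nt(1-t)}+(m-1)nt(1-t)$ one gets a total remainder of order $K^2/\sqrt{D(\eta)}$ at any $s=O(s_\star)$. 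For $D(\eta)$ large both errors fit into the gap between $K^2/(2+\eta/2)$ and $K^2/(2+\eta)$, yielding $\phi(s)\geq K^2/(2+\eta)$ and hence $M\mu(x)>e^{K^2/(2+\eta)}$. The main technical hurdle is precisely this last step: bookkeeping both error estimates tightly enough to preserve the sharp leading constant $K^2/2$ in the exponent.
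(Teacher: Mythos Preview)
Your proposal is correct and follows essentially the same route as the paper: you use the identical one-parameter family of test cubes (your $r_k=k+\tfrac{1+t}{2}$ is the paper's $r=s-\tfrac{1-t}{2}$ with $s=k+1$), extract the leading $K^2/2$ by a Taylor expansion of the log-density, and then control the same two error sources, the cubic-and-higher Taylor tail and the rounding to an integer parameter. The only organizational difference is that the paper first locates the exact continuous maximizer of $F$ and bounds $F(\lfloor s_0\rfloor)-F(s_0)$ via the mean value theorem, whereas you maximize the quadratic truncation $\phi_2$ and treat the tail $\phi-\phi_2$ separately; both arrive at an error of order $K^2/\sqrt{D}$, which is exactly what is needed.
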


Since the proof of lemma \ref{lemme1} is independent from the rest of the argument, we postpone it to the
end of the paper. 

\subsection{Probabilistic point of view}

If we think of $([0,1]^n, \vol)$ as a probability space, the coordinates $x_1,\dots,x_n$ are independent random variables. Let $X_i := 2|x_i-\frac{1}{2}|$. It is easily checked that $(X_1,\dots,X_n)$ are also independent random variables uniformly distributed on $[0,1]$. For $0 \leq t \leq 1$, we introduce new random variables which count the number of $t$-centered coordinates
\begin{equation} \label{def-alpha} \alpha_t^{(n)} := \frac{1}{\sqrt{n}} \sum_{i=1}^n \left( {\bf 1}_{\left\{\frac{1-t}{2} \leq x_i \leq \frac{1+t}{2} \right\}} -t \right) 
= \frac{1}{\sqrt{n}} \sum_{i=1}^n \left( {\bf 1}_{\{X_i \leq t \}} -t \right)
.\end{equation}
The Lebesgue measure of the union (over $t$) of the sets $E^n_{t,k}$ become the probability that a certain supremum exceeds $K$. For example, for any $0 < \e < 1/2$ , 
\begin{equation} \label{ana-prob} \vol \left( \bigcup_{\e \leq t \leq 1-\e} E_{t,K}^n \right) = \P \left( \sup_{\e \leq t \leq 1-\e} \frac{\alpha_t^{(n)}}{\sqrt{t(1-t)}} \geq K \right). \end{equation}
(it is easily checked that the set involved is measurable).

\subsection{A simple proof that $(\Theta_n)$ tends to $+\iy$}

We show that the sequence $(\Theta_n)$ is unbounded using qualitative arguments. We actually show a more precise result: the typical value of the maximal function is large.

\begin{thm}
\label{theo2}
For any $L > 0$, we have
\[ \lim_{n \to \iy} \vol \left( \{ x \in [0,1]^n \textnormal{ s.t. } M\mu(x) > L \} \right) = 1\]
\end{thm}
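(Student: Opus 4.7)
The plan is to combine Lemma~\ref{lemme1} with Donsker's invariance principle. Fix $L>0$. Apply Lemma~\ref{lemme1} with $\eta=1$ and choose $K>0$ such that $e^{K^2/3}>L$. Then for every $\e\in(0,1/2)$ and every integer $n\geq D(1)K^2/(\e(1-\e))$, Lemma~\ref{lemme1} gives the inclusion
\[ \bigcup_{\e\leq t\leq 1-\e} E_{t,K}^n \;\subset\; \{M\mu>L\}, \]
so by \eqref{ana-prob} we have the lower bound
\[ \vol(\{x\in[0,1]^n:M\mu(x)>L\})\;\geq\;\P\!\left(\sup_{\e\leq t\leq 1-\e}\frac{\alpha_t^{(n)}}{\sqrt{t(1-t)}}\geq K\right). \]
It will therefore be enough to show that the right-hand side can be made arbitrarily close to $1$ by first choosing $\e$ small and then $n$ large.

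The next step is to invoke Donsker's theorem. The process $(\alpha_t^{(n)})_{t\in[0,1]}$ defined in \eqref{def-alpha} is the classical normalized empirical process of $n$ i.i.d.\ uniform random variables, so it converges in distribution, in the Skorokhod space $D[0,1]$, to the Brownian bridge $(\beta_t)_{t\in[0,1]}$. On the compact subinterval $[\e,1-\e]$ the weight $1/\sqrt{t(1-t)}$ is bounded and continuous, so the functional $f\mapsto \sup_{\e\leq t\leq 1-\e}f(t)/\sqrt{t(1-t)}$ is continuous with respect to the uniform topology at every continuous path. Since $\beta$ has continuous paths almost surely, the continuous mapping theorem together with the Portmanteau inequality for the open half-line $(K,+\iy)$ yields
\[ \liminf_{n\to\iy}\P\!\left(\sup_{\e\leq t\leq 1-\e}\frac{\alpha_t^{(n)}}{\sqrt{t(1-t)}}\geq K\right)\;\geq\;\P\!\left(\sup_{\e\leq t\leq 1-\e}\frac{\beta_t}{\sqrt{t(1-t)}}>K\right). \]

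It remains to check that this last probability tends to $1$ as $\e\to 0^+$. Writing $\beta_t=W_t-tW_1$ for a standard Brownian motion $W$, the process $\beta_t/\sqrt{t(1-t)}$ is asymptotically equivalent to $W_t/\sqrt{t}$ as $t\to 0^+$; the law of the iterated logarithm for $W$ at the origin, namely $\limsup_{t\to 0^+}W_t/\sqrt{2t\log\log(1/t)}=1$ almost surely, therefore shows that $\sup_{0<t\leq\e}\beta_t/\sqrt{t(1-t)}=+\iy$ almost surely for every $\e>0$. Consequently the probability on the right-hand side above converges to $1$ as $\e\to 0^+$, and the theorem follows.

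The main obstacle is really the unboundedness of the weight $1/\sqrt{t(1-t)}$ at the two endpoints of $[0,1]$, which forbids a direct application of Donsker's theorem on the whole interval. Truncating to $[\e,1-\e]$ tames the functional just enough to make it a continuity point of the limiting law, and the behaviour of the Brownian bridge near the endpoints, captured by the law of the iterated logarithm, guarantees that almost no probability mass is lost in this truncation.
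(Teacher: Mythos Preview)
Your proof is correct, but it takes a different route from the paper's. The paper deliberately keeps the proof of Theorem~\ref{theo2} elementary: it never invokes Donsker's theorem or the law of the iterated logarithm in this section. Instead, it restricts attention to the \emph{finite} family $t_k=k/(k+1)$, $1\leq k\leq N$, applies only the multivariate central limit theorem (Lemma~\ref{clt}) to the vector $(\alpha_{t_1}^{(n)},\dots,\alpha_{t_N}^{(n)})$, and then observes the distributional identity
\[
\left(\frac{\beta_{t_k}}{\sqrt{t_k(1-t_k)}}\right)_{1\leq k\leq N}\ \sim\ \left(\frac{B_k}{\sqrt{k}}\right)_{1\leq k\leq N},
\]
where $B_k=G_1+\dots+G_k$ is a Gaussian random walk. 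The fact that $\sup_k B_k/\sqrt k=+\infty$ a.s.\ then follows from Kolmogorov's $0/1$ law plus Fatou, with no appeal to the LIL. This buys a genuinely elementary argument requiring neither process-level weak convergence nor continuous-mapping/Portmanteau reasoning.

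Your argument, by contrast, is essentially the qualitative shadow of the paper's Section~2 proof of Theorem~\ref{main}: you use full Donsker convergence on $D[0,1]$, truncate to $[\e,1-\e]$ to make the weighted-sup functional continuous, and then send $\e\to 0$ using the behaviour of $\beta_t/\sqrt{t(1-t)}$ near the boundary. This is a perfectly clean and natural proof, and indeed the paper's quantitative argument proceeds in exactly this spirit (with KMT replacing Donsker and Lemma~\ref{lemma-LIL} replacing your qualitative LIL step). One minor remark: you do not actually need the full LIL; the weaker fact $\limsup_{t\to 0^+}W_t/\sqrt t=+\infty$ a.s.\ already suffices, and this follows from a $0/1$-law argument identical to the paper's Claim.
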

It is clear that Theorem \ref{theo2} and Lemma \ref{lemma-truncation} imply together that the sequence $(\Theta_n)$ tends to $+\iy$. Before passing to the proof of Theorem \ref{theo2}, let us state a lemma describing the joint asymptotic behaviour of $(\alpha_t^{(n)})_{n \in \N}$ when $t$ takes finitely many values.

\begin{lem}
\label{clt}
Let $t_1,\dots,t_N$ be elements of $[0,1]$. The random vector $(\alpha_{t_1}^{(n)},,\dots,\alpha_{t_N}^{(n)})$ converges weakly towards the Gaussian random vector $(\beta_{t_1},\dots,\beta_{t_N})$ given by the covariance
\[ \E \beta_{t_i} \beta_{t_j} = t_i(1-t_j) \ \ \ \ \ \ \textnormal{ for } 0 \leq t_i \leq t_j \leq 1. \]
\end{lem}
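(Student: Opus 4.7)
The plan is to recognize that each coordinate $\alpha_{t_k}^{(n)}$ is, by definition \eqref{def-alpha}, a normalized sum of $n$ i.i.d.\ centered terms, and that the whole random vector $(\alpha_{t_1}^{(n)},\dots,\alpha_{t_N}^{(n)})$ is the normalized sum of $n$ i.i.d.\ copies of a single $\R^N$-valued random vector. The statement then follows from the multivariate central limit theorem, provided the covariance matrix comes out right.

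Concretely, for $1 \leq k \leq N$ and $1 \leq i \leq n$, set
\[ Y_i^{(k)} := \mathbf{1}_{\{X_i \leq t_k\}} - t_k, \qquad Y_i := (Y_i^{(1)}, \dots, Y_i^{(N)}) \in \R^N. \]
Since the $X_i$ are i.i.d.\ uniform on $[0,1]$, the $\R^N$-vectors $Y_1,\dots,Y_n$ are i.i.d., centered, and bounded (hence have finite second moment). By \eqref{def-alpha} we have
\[ (\alpha_{t_1}^{(n)},\dots,\alpha_{t_N}^{(n)}) = \frac{1}{\sqrt{n}} \sum_{i=1}^n Y_i, \]
so the multivariate CLT yields weak convergence towards the centered Gaussian vector in $\R^N$ whose covariance matrix is that of $Y_1$.

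It remains to compute this covariance. For $t_k \leq t_l$, using $\mathbf{1}_{\{X_1 \leq t_k\}} \mathbf{1}_{\{X_1 \leq t_l\}} = \mathbf{1}_{\{X_1 \leq t_k\}}$, we get
\[ \E[Y_1^{(k)} Y_1^{(l)}] = \P(X_1 \leq t_k) - t_k t_l = t_k - t_k t_l = t_k(1-t_l), \]
which exactly matches the covariance of $(\beta_{t_k},\beta_{t_l})$ announced in the lemma. This completes the proposed proof.

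There is no real obstacle here: the only mildly delicate point is making sure one applies the CLT to the \emph{joint} vector $Y_i$ (so as to control the cross-covariances between different $\alpha_{t_k}^{(n)}$), rather than to each coordinate separately, which would only yield marginal convergence.
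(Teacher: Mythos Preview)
Your proof is correct and follows exactly the paper's approach: compute the covariance $\E[(\mathbf{1}_{\{X\leq t\}}-t)(\mathbf{1}_{\{X\leq u\}}-u)]=\min(t,u)-tu$ and invoke the multivariate central limit theorem. Your write-up simply spells out in more detail what the paper states in two lines.
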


\begin{proof}
One computes the covariance of the random variables involved in the definition of $(\alpha_t^{(n)})$
\[ \E \left[ ( {\bf 1}_{\{X \leq t\}} - t) ( {\bf 1}_{\{X \leq u\}} -u ) \right] = \min(t,u) - tu .\]
The lemma now follows from the multivariate central limit theorem (see \cite{durrett}, p.168).
\end{proof}

\begin{proof}[Proof of Theorem \ref{theo2}]
Fix $K$ and $\e >0$. We apply Lemma \ref{clt} with $t_k=\frac{k}{k+1}$ and $N$ to be chosen. Having \eqref{ana-prob} in mind, we obtain the following
\begin{eqnarray*} \ell_N := \lim_{n \to \iy} \vol \left( \bigcup_{k=1}^N E_{t_k,K}^n \right)  & = & \lim_{n \to \iy} \P \left( \max_{1 \leq k \leq N} \frac{\alpha_{t_k}^{(n)}}{\sqrt{t_k(1-t_k)}} >K \right) \\ &=& \P \left( \max_{1 \leq k \leq N} \frac{\beta_{t_k}}{\sqrt{t_k(1-t_k)}} >K \right) 
\end{eqnarray*}
The last equality follows from Lemma \ref{clt}. Let $(G_n)$ be a sequence of independent $N(0,1)$ random variables, and let $B_n=G_1+\dots+G_n$. The sequence $(B_n)$ is actually a Brownian motion restricted to integer times. We claim that the following random vectors coincide in distribution
\begin{equation} \label{bridge-motion} \left(\frac{\beta_{t_k}}{\sqrt{t_k(1-t_k)}}\right)_{1 \leq k \leq N} \sim \left( \frac{B_k}{\sqrt{k}} \right)_{1 \leq k \leq N}. \end{equation}
Indeed, since both are centered Gaussian vectors, it is enough to show that they share the same covariance, as it is easily checked: for $i \leq j$,
\[ \E \left( \frac{\beta_{t_i}}{\sqrt{t_i(1-t_i)}} \frac{\beta_{t_j}}{\sqrt{t_j(1-t_j)}}  \right) = \E \left( \frac{B_i}{\sqrt{i}} \frac{B_j}{\sqrt{j}} \right) = \sqrt{\frac ij} .\]
We obtain therefore
\[ \ell_N = \P \left( \max_{1 \leq k \leq N} \frac{B_k}{\sqrt{k}} > K \right) \ \ \ \ \ \textnormal{ and } \ \ \ \ \ \ \lim_{N \to \iy} \ell_N  = \P \left(\sup_{k \in \N^*} \frac{B_k}{\sqrt{k}} > K \right).\]
The next claim is a well-known property of Brownian motion.

\begin{claim*}
With $(B_k)_{k \in \N^*}$ as before, we have for any $K>0$, $\displaystyle \P \left( \limsup_{k \to \iy} \frac{B_k}{\sqrt{k}} >K \right) =1$.
\end{claim*}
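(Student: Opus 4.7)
The plan is to combine Kolmogorov's zero-one law with the elementary central-limit scaling $B_k/\sqrt{k} \sim N(0,1)$ to show that the event in question has positive probability, hence probability one.

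First, I would check that $A_K := \{\limsup_{k \to \iy} B_k/\sqrt{k} > K\}$ is a tail event of the i.i.d.\ sequence $(G_n)$. For any fixed $m$, writing $S_m := G_1 + \cdots + G_m$ and $\tilde B_k := G_{m+1} + \cdots + G_k$ (for $k > m$), one has $B_k/\sqrt{k} = S_m/\sqrt{k} + \tilde B_k/\sqrt{k}$. On each sample path $S_m$ is a finite constant, so $S_m/\sqrt{k} \to 0$ and $\limsup_k B_k/\sqrt{k} = \limsup_k \tilde B_k/\sqrt{k}$. The right-hand side is measurable with respect to $\sigma(G_{m+1}, G_{m+2}, \ldots)$, so $A_K$ lies in the Kolmogorov tail $\sigma$-algebra. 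Consequently $\P(A_K) \in \{0,1\}$.

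To rule out $\P(A_K)=0$, I would pick any $K' > K$ and consider the events $C_k := \{B_k/\sqrt{k} > K'\}$. Since $B_k$ is a sum of $k$ independent standard Gaussians, $B_k/\sqrt{k}$ is itself standard Gaussian, so $\P(C_k) = \P(G > K') =: p > 0$ for every $k$. Applying the reverse Fatou inequality for sets,
\[ \P\left( \limsup_k C_k \right) \geq \limsup_k \P(C_k) = p. \]
Since $\limsup_k C_k \subseteq \{\limsup_k B_k/\sqrt{k} \geq K'\} \subseteq A_K$, this forces $\P(A_K) \geq p > 0$, and combining with the previous dichotomy yields $\P(A_K) = 1$.

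The only real subtlety is checking the tail-event property; after that the argument uses only Kolmogorov's zero-one law and the reverse Fatou inequality. A one-line alternative would be to invoke the Hartman--Wintner law of the iterated logarithm, which gives $\limsup_k B_k/\sqrt{2k\log\log k} = 1$ a.s.\ and hence $\limsup_k B_k/\sqrt{k} = +\iy$ a.s. However, since at this stage only the qualitative $+\iy$ statement is needed, the self-contained zero-one law argument is the natural choice.
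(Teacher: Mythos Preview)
Your proof is correct and follows exactly the paper's approach: Kolmogorov's 0--1 law to reduce to showing positive probability, then the reverse Fatou inequality combined with the fact that $B_k/\sqrt{k}\sim N(0,1)$ for every $k$. Your version is simply more careful, explicitly verifying the tail-event property and introducing $K'>K$ to handle the strict inequality, a point the paper glosses over.
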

Since $\lim \ell_N=1$, we can choose $N$ to that $\ell_N > 1-\e/2$. This implies that for large enough dimensions,
\[ \vol \left( \bigcup_{k=1}^N E_{t_k,K}^n \right) \geq 1-\e .\]
Using Lemma \ref{lemme1} with $\eta=1$, we conclude that for large enough dimensions, the maximal function $M\mu$ is larger that $\exp(K^2/3)$ on the above set. This proves Theorem \ref{theo2} (with $\exp(K^2/3)$ instead of $L$). 
\end{proof}

\begin{proof}[Proof of the Claim]
It follows from Kolmogorov's 0/1 law (\cite{durrett}, p.61) that the event into consideration has probability 0 or 1. By Fatou's lemma
\[ \P \left( \limsup_{k \to \iy} \frac{B_k}{\sqrt{k}} >K \right) \geq \limsup_{k \to \iy} \P\left(\frac{B_k}{\sqrt{k}} >K \right) .\]
The quantity in the r.h.s. does not depend on $k$ and is nonzero. This proves the claim.
\end{proof}

\section{Proof of theorem \ref{main}}

To keep the previous section elementary, we did not mention about stochastic processes. We now introduce the material needed.

\subsection{The Brownian bridge}
A theorem by Donsker \cite{donsker} asserts that when $n$ tends to $+\iy$, the process $(\alpha_t^{(n)})_{0 \leq t \leq 1}$ defined in \eqref{def-alpha} converges in the supremum norm towards a Brownian bridge $(\beta_t)_{0 \leq t \leq 1}$. Recall that a Brownian bridge $(\beta_t)_{0 \leq t \leq 1}$ is defined
as a Gaussian process which is almost surely continuous and given by the covariance 
\[ \E \beta_t \beta_u = t(1-u) \ \ \ \ \ \ \textnormal{ for } 0 \leq t \leq u \leq 1. \]
In particular, $\beta_0=\beta_1=0$ almost surely. We refer to (\cite{durrett}, Chapter 7.8) for more information on the Brownian bridge. Note that Lemma \ref{clt} from previous section deals with (elementary) convergence of finite-dimensional marginals; however it will be convenient to consider infinitely many values of $t$ simultaneously.

Since we are interested in non-asymptotic bounds, we also need more quantitative results about the speed of convergence in Donsker's theorem. This is provided by the following theorem, due to Koml\'os, Major and Tusn\'ady \cite{kmt}. We use the version appearing in a paper by Bretagnolle and Massart~\cite{bm}.

\begin{thm-nonumber}[Koml\'os--Major--Tusn\'ady]
For any $n \geq 1$, there exists a probability space on which are defined 
\begin{itemize}
\item A $n$-tuple of i.i.d. random variables uniformly distributed on $[0,1]$: $(X_1,\dots,X_n)$,
\item a Brownian bridge: $(\beta_t^{(n)})_{0 \leq t \leq 1}$,
\end{itemize}
so that, denoting
\[ \alpha_t^{(n)} = \frac{1}{\sqrt{n}} \sum_{j=1}^n \left( {\bf 1}_{\{X_j \leq t\}} -t \right), \]
the following inequality is valid for any $x>0$:
\[ \P \left( \sup_{0 \leq t \leq 1} | \sqrt{n}(\alpha_t^{(n)}-\beta_t^{(n)})| > 12 \log n + x \right) < 2 \exp(-x/6). \]
\end{thm-nonumber}

\subsection{Relation to Brownian motion}

There are several ways to relate the Brownian bridge to the Brownian motion. Let $(B_t)_{t \geq 0}$ be a standard Brownian motion, given by the covariance $\E B_tB_u = \min(t,u)$. It is easily checked, just by computing the covariance, that the process $(B_t-tB_1)_{0 \leq t \leq 1}$ is a Brownian bridge. Similarly, the process
\begin{equation} \label{bb-bm} \left((1-t)B_{\frac{t}{1-t}}\right)_{0 \leq t \leq 1} \end{equation}
is also a Brownian bridge --- we actually used this transformation in the proof of Theorem \ref{theo2}. 

As the formula \eqref{ana-prob} hints, we need to control the supremum of $\beta_t/\sqrt{t(1-t)}$. Using the transformation given by \eqref{bb-bm}, this amounts to controlling the supremum of $B_t/\sqrt{t}$, where $(B_t)$ is a Brownian motion. In the previous section, we used the well-known fact that this supremum, when taken over $(0,\iy)$, is almost surely $+\iy$. To obtain concrete lower bounds, we need a more quantitative result, given by the law of the iterated logarithm. This is the statement of the next lemma. Since we could not find this exact statement in the literature, we include a proof at the end of the paper.

\begin{lem}
\label{lemma-LIL}
Let $(\beta_t)_{0 \leq t \leq 1}$ be a Brownian bridge. For any $\eta \in (0,2)$, there exists a constant $c(\eta)>0$ so that for any $0 < \e \leq 1/e$,
\begin{equation} \label{lil-bb}
 \P \left( \sup_{\e \leq t \leq 1-\e} \frac{\beta_t}{\sqrt{t(1-t)}} \geq \sqrt{(2-\eta)\log \log (1/\e)} \right) \geq c(\eta) .
\end{equation}
\end{lem}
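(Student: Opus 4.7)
My plan is to reduce the Brownian bridge statement to a finite-time law of the iterated logarithm for standard Brownian motion, then prove that via a discretization plus a second-moment (Paley--Zygmund) argument.

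Using the representation $\beta_t = (1-t) B_{t/(1-t)}$ from~\eqref{bb-bm} with $B$ a standard Brownian motion, a direct calculation gives $\beta_t/\sqrt{t(1-t)} = B_s/\sqrt{s}$ for $s = t/(1-t)$. Restricting to $t \in [1/2, 1-\e]$ corresponds to $s \in [1, T]$ with $T := (1-\e)/\e$, so
\[ \sup_{\e \leq t \leq 1-\e} \frac{\beta_t}{\sqrt{t(1-t)}} \geq \sup_{1 \leq s \leq T} \frac{B_s}{\sqrt{s}} . \]
Writing $K_\e := \sqrt{(2-\eta)\log\log(1/\e)}$, the task reduces to showing $\P(\sup_{1 \leq s \leq T} B_s/\sqrt{s} \geq K_\e) \geq c(\eta)$ for all $\e \in (0, 1/e]$.

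For $\e \leq \e_0(\eta)$ (so $T$ is large) I would discretize on a geometric grid $s_k := \lambda^k$, $k = 0, 1, \ldots, K := \lfloor \log_\lambda T \rfloor$, where $\lambda > 1$ is fixed. The variables $X_k := B_{s_k}/\sqrt{s_k}$ are standard Gaussians with correlations $\E X_j X_k = \lambda^{-|j-k|/2}$. Letting $N := \#\{k : X_k \geq K_\e\}$, the Gaussian tail estimate $\bar\Phi(K_\e) \sim e^{-K_\e^2/2}/(K_\e\sqrt{2\pi})$ gives
\[ \E N = (K+1)\bar\Phi(K_\e) \sim \frac{(\log(1/\e))^{\eta/2}}{\log\lambda \cdot \sqrt{2\pi(2-\eta)\log\log(1/\e)}} \longrightarrow \infty . \]
To control $\E N^2$, split the pair sum according to $|j-k|$: when $|j-k| \geq M := C(\lambda)\log\log\log(1/\e)$, the correlation $\rho_{jk}$ is small enough that $K_\e^2\rho_{jk} = O(1)$, and the sharp bivariate Gaussian tail inequality $\P(X_j \geq K, X_k \geq K) \lesssim \bar\Phi(K)^2 \exp(K^2\rho_{jk}/(1+\rho_{jk}))$ gives a total contribution $O((\E N)^2)$; the remaining near-diagonal pairs contribute $O(M \cdot \E N) = o((\E N)^2)$ via the trivial bound $\bar\Phi(K_\e)$. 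Thus $\E N^2 \lesssim (\E N)^2$, and Paley--Zygmund yields $\P(N \geq 1) \geq (\E N)^2/\E N^2 \geq c_1(\eta) > 0$.

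For $\e \in [\e_0(\eta), 1/e]$, the threshold $K_\e$ stays bounded and the probability in~\eqref{lil-bb} is a strictly positive continuous function of $\e$, hence bounded below on this compact interval by some $c_2(\eta) > 0$; one takes $c(\eta) := \min(c_1,c_2)$. The main obstacle is the second-moment estimate: it requires the sharp bivariate Gaussian tail bound for small positive correlations, together with a careful accounting showing that the near-diagonal band, of width only $O(\log\log\log(1/\e))$, contributes asymptotically less than $(\E N)^2$.
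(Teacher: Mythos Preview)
Your reduction to Brownian motion via \eqref{bb-bm} and the geometric discretization $s_k=\lambda^k$ coincide with the paper's setup; the difference is in how the discretized problem is finished off. You work directly with the correlated standardized values $X_k=B_{s_k}/\sqrt{s_k}$ and run a second-moment/Paley--Zygmund argument, which obliges you to control bivariate Gaussian tails $\P(X_j\geq K,\,X_k\geq K)$ uniformly over $\rho\in(0,\lambda^{-1/2}]$; your splitting into a near-diagonal band of width $O(\log\log\log(1/\e))$ and a far region with $K_\e^2\rho_{jk}=O(1)$ is exactly right and does give $\E N^2\leq C(\lambda)(\E N)^2$. The paper instead sidesteps the correlations entirely: it looks at the \emph{increments} $B_{\alpha^{j+1}}-B_{\alpha^j}$, which are independent, notes that a large normalized increment forces either $B_{\alpha^{j+1}}/\sqrt{\alpha^{j+1}}\geq K$ or $B_{\alpha^j}/\sqrt{\alpha^j}\leq -K$, and uses a symmetry trick to pass to $|B_t|/\sqrt t$. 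With independence in hand, $\P(\bigcup E_j)\geq 1-(1-\P(E_1))^N$ follows from the one-dimensional Gaussian tail alone. Your route is the standard ``extremes of stationary Gaussian sequences'' method and would transfer to other covariance structures; the paper's route is more elementary here, avoiding the bivariate tail estimate that is the one nontrivial analytic input in your argument.
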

We are now ready to prove our main theorem.

\subsection{Proof of Theorem \ref{main}}
Fix $\eta >0$ and let $c(\eta)$ the constant given by Lemma \ref{lemma-LIL}, which we apply with the choice $\e := \log^2 n/n$. Choose now $x>0$ so that $2 \exp(-x/6) < c(\eta)/2$.  Applying Koml\'os--Major--Tusn\'ady theorem, we obtain a coupling of $\alpha_t^{(n)}$ and $\beta_t^{(n)}$ so that the following events hold simultaneously with probability larger than $c(\eta)/2$
\[ \left\{ \begin{array}{c} \displaystyle \sup_{\e \leq t \leq 1-\e} \frac{\beta_{t}^{(n)}}{\sqrt{t(1-t)}} \geq \sqrt{(2-\eta)\log \log (1/\e)} 
            \\
\displaystyle \forall t \in [0,1], \ \ \ \ \ \alpha_t^{(n)}> \beta_t^{(n)} - \frac{12 \log n + x}{\sqrt{n}}
           \end{array}
\right. \]
This shows that
\[ \P \left( \sup_{\e \leq t \leq 1-\e}  \frac{\alpha_{t}^{(n)}}{\sqrt{t(1-t)}} \geq \sqrt{(2-\eta)\log \log (1/\e)} 
- \frac{12 \log n + x}{\sqrt{n\e(1-\e)}} \right) \geq \frac{c(\eta)}{2}.\]
Set $K := \sqrt{(2-\eta)\log \log (1/\e)} - \frac{12 \log n + x}{\sqrt{n\e(1-\e)}}$. Using formula \eqref{ana-prob}, we obtain
\[ \vol \left( \bigcup_{\e \leq t \leq 1-\e} E_{t,K}^n \right) \geq \frac{c(\eta)}{2} .\]
One checks that for $n$ large enough, $K \geq \sqrt{(2-2\eta) \log \log n}$. Also, for $n$ large enough, $n \geq \frac{D(\eta) K^2}{\e(1-\e)}$, so that we can use lemma \ref{lem1} and conclude that
\[ \vol \left(\left\{ x \in [0,1]^n \textnormal{ s.t. } M\mu(x) > e^{K^2/(2+\eta)} \right\}\right) \geq \frac{c(\eta)}{2}. \]
Using lemma \ref{lemma-truncation}, this implies that for $n$ large enough,
\[ \Theta_n \geq \frac{c(\eta)}{2} e^{K^2/(2+\eta)} \geq \frac{c(\eta)}{2} (\log n)^{\frac{2-2\eta}{2+\eta}}. \]
This inequality can be extended to all $n$ by adjusting $c(\eta)$ if necessary. Since $\frac{2-2\eta}{2+\eta}$ is arbitrarily close to 1, this proves the theorem.

\section{Proof of lemmas \ref{lemme1} and \ref{lemma-LIL}}

\subsection{Proof of lemma \ref{lemme1}}
A variant of this lemma appears in \cite{aldaz}. Let $x \in E_{t,K}^n$ and define $D$ by the relation $n=\frac{D K^2}{t(1-t)}$. We will show that if $D > 9$, then
\begin{equation} \label{aaaa} \log M\mu(x) \geq \frac{K^2}{2} - \frac{K^2}{6 \sqrt{D}} - \frac{4K^2 \sqrt{D}}{(\sqrt{D}-3)^3},  
\end{equation}
The lemma follows since the right-hand side of \eqref{aaaa} is larger than $\frac{K^2}{2+\eta}$ for $D$ large enough. We start with an elementary one-dimensional consideration: if $x_i \in [0,1]$, then
\[
\card \left( \left[x_i-\left(s-\frac{1-t}{2}\right),x_i+\left(s-\frac{1-t}{2}\right)\right] \cap \Z \right) = 
\begin{cases}
2s & \textnormal{ if $x_i$ is $t$-centered}, \\
2s-1 & \textnormal{ otherwise.}
\end{cases}
\]
Consequently,  if $x \in [0,1]^n$ has at least $m$ $t$-centered coordinates, then 
\[ M\mu(x) \geq \sup_{s \in \N^*} \frac{\mu(Q(x,s-(1-t)/2))}{\vol(Q(x,s-(1-t)/2))} = \sup_{s \in \N^*} \frac{(2s)^m(2s-1)^{n-m}}{(2s-(1-t))^n} \]
(we do not assume that $m$ is an integer). If $x \in E_{t,K}^n$, we may choose $m = nt + K \sqrt{nt(1-t)}$ and therefore
\[ \log M\mu(x) \geq \sup_{s \in \N^*} F(s) ,\]
where $F$ is the function defined as
\begin{eqnarray*} F(s) & = & K^2 \left[ \left( \frac{D}{1-t}+\sqrt{D} \right) \log(2s) + \left( \frac{D}{t} - \sqrt{D} \right) \log(2s-1) - \frac{D}{t(1-t)} \log(2s-1+t) \right] \\
 & = & K^2 \left[ \left( \frac{D}{1-t}+\sqrt{D} \right) \log \left(\frac{2s}{2s-1+t}\right) + \left( \frac{D}{t} - \sqrt{D}
 \right) \log \left( \frac{2s-1}{2s-1+t} \right) \right] .
\end{eqnarray*}
We first compute the supremum of $F$ over $s \in \R^+$. One gets the following expression for the derivative:
\[ F'(s) = \frac{K^2 \sqrt{D} ( \sqrt{D} +1-t-2s)}{s(2s-1)(2s-1+t)}.\]
Thus, $F$ is maximal on $\R^+$ at the point $s_0$ defined as
\[ s_0 := \frac{\sqrt{D}+1-t}{2} > 1.\]
The maximal value of $F$ is 
\[ F(s_0) = K^2 \left( \frac{D}{1-t} + \sqrt{D} \right) \log \left( 1 + \frac{1-t}{\sqrt{D}} \right) + K^2 \left( \frac{D}{t} - \sqrt{D} \right) \log \left( 1 - \frac{t}{\sqrt{D}} \right).\]
Let $\Phi(x) = (1+x) \log(1+x)$. One checks by consecutive differentiation that $\Phi(x) \geq x+\frac{x^2}{2}-\frac{x^3}{6}$ for $x > -1$. We get
\[ F(s_0) = \frac{K^2D}{1-t} \Phi\left(\frac{1-t}{\sqrt{D}}\right) + \frac{K^2D}{t} \Phi\left(\frac{-t}{\sqrt{D}}\right) \geq \frac{K^2}{2} + \frac{K^2(2t-1)}{6\sqrt{D}}  .\]
However, since we are only allowed to consider integer-valued $s$, we need to
evaluate $F(\lfloor s_0 \rfloor)$, and show that it is not very different from $F(s_0)$. For $z \in [s_0-1,s_0]$, we have
\[ 2z \geq 2z-1+t \geq 2z-1 \geq 2s_0-3 \geq \sqrt{D} - 3 > 0.\]
This implies, for $z \in [s_0-1,s_0]$
\[ | F'(z) | \leq \frac{4K^2D}{(\sqrt{D}-3)^3} .\]
Applying the mean value theorem to $F$ between $\lfloor s_0 \rfloor$ and $s_0$ gives 
\[
\log M\mu(x) \geq F(\lfloor s_0 \rfloor) \geq \frac{K^2}{2} + \frac{K^2(2t-1)}{6\sqrt{D}} - \frac{4K^2D}{(\sqrt{D}-3)^3}.
\]
Since $2t-1 > -1$, we get \eqref{aaaa}, as we claimed.

\subsection{Proof of lemma \ref{lemma-LIL}}

Fix $\eta \in (0,2)$ and let $(B_t)_{t\geq 0}$ be a standard Brownian motion. Using the transformation given by \eqref{bb-bm}, we check that, denoting $A = 1/\e$,
\[ \P \left( \sup_{\e \leq t \leq 1-\e} \frac{\beta_t}{\sqrt{t(1-t)}} \geq \sqrt{(2-\eta)\log \log (1/\e)} \right) = \P \left( \sup_{\frac{1}{A-1} \leq t \leq A-1} \frac{B_t}{\sqrt{t}} \geq \sqrt{(2-\eta)\log \log A} \right)  .   \]
We will actually estimate the supremum of $B_t/\sqrt{t}$ over $[1,A-1]$. Since $(B_t)$ and $(-B_t)$ have the same distribution, we have
\[  \P \left( \sup_{1 \leq t \leq A-1} \frac{B_t}{\sqrt{t}} \geq \sqrt{(2-\eta)\log \log A} \right) \geq \frac{1}{2}
  \P \left( \sup_{1 \leq t \leq A-1} \frac{|B_t|}{\sqrt{t}} \geq \sqrt{(2-\eta)\log \log A}  \right) \]
Choose $\alpha > 1$ large enough so that $\rho<2$, where $\rho$ denotes the number
\[ \rho := \frac{(2-\eta)(1+\sqrt{\alpha})^2}{\alpha-1} .\] Let $N:=\left\lfloor \frac{\log (A-1)}{\log \alpha} \right\rfloor-1$ and for $1 \leq j \leq N$, consider the event
\[ E_j:= \left\{ \omega \textnormal{ s.t. } \frac{B_{\alpha^{j+1}}(\omega) - B_{\alpha^j}(\omega)}{\sqrt{\alpha^{j+1}-\alpha^j}} \geq \sqrt{\rho\log \log A} \right\} . \]
One checks that if $E_j$ holds, then at least one of the following inequalities is true:
\[ \begin{cases}  \textnormal{either } & B_{\alpha^{j+1}} \geq \sqrt{(2-\eta) \log \log A} \cdot \sqrt{\alpha^{j+1}} \\  
 \textnormal{or } &  B_{\alpha^j} \leq - \sqrt{(2-\eta) \log \log A} \cdot \sqrt{\alpha^{j}}. \\ \end{cases} \]
Consequently,
\[ \P \left( \sup_{1 \leq t \leq A} \frac{|B_t|}{\sqrt{t}} \geq \sqrt{(2-\eta)\log \log A} \right) \geq  \P \left( \bigcup_{1 \leq j \leq N} E_j \right) .\]
Now because the Brownian motion has independent increments, the events $E_j$ are independent, and they all have the same probability
\[ \P(E_j) = \P \left(G > \sqrt{\rho \log\log A} \right) ,\]
where $G$ is a standard Gaussian random variable. We use the following estimate (see \cite{durrett}, p.6): for $x \geq 2$
\[ \P(G > x ) \geq \frac{x^{-1}-x^{-3}}{\sqrt{2\pi}} e^{-x^2/2} \geq C(\rho) \exp(-x^2/\rho) \]
for some constant $C(\rho)>0$. We obtain $\P(E_j) \geq C(\rho) (\log A)^{-1}$. Finally, since the events $E_j$ are independent,
\[ \P \left( \bigcup_{1 \leq j \leq N} E_j \right) \geq 1 - \left( 1 - \frac{C(\rho)}{\log A} \right)^N \geq 1-\exp \left(-\frac{C(\rho)N}{\log A} \right) . \]
Given our choice of $N$, one checks that the last expression is bounded below by some positive constant depending only on $\eta$ when $A$ tends to $+\iy$. This proves the lemma, at least for $A$ large enough. Small values of $A$ can be taken into account by adjusting a posteriori the constant $c(\eta)$ if necessary.

\noindent
Institut Camille Jordan, Université de Lyon 1.
\\ 
E-mail: \verb!aubrun@math.univ-lyon1.fr! 
\\ 
Homepage:  \verb!http://math.univ-lyon1.fr/~aubrun/!

\end{document}